\newtheorem{theo}{Theorem}[section]
\newtheorem{prop}[theo]{Proposition}
\newtheorem{lemm}[theo]{Lemma}
\newtheorem{rem}[theo]{Remark}
\newenvironment{rema}{\begin{rem}\rm }{\hfill $\blacktriangleleft$\end{rem}}
\def\bb#1{\mathbb{#1}} \def\bbcp{\mathbb{C}\mathbb{P}} \def\m#1{\mathcal{#1}}
 \def\co{\colon\thinspace}
\def\ham#1{\mathrm{Ham}#1}
\def\Piuniv{\Pi^{\mathrm{univ}}}
\def\QH{{QH}}
\def\SR{{SR}}
\def\Q{\mathbb Q}
\def\N{\mathbb N}
\def\R{\mathbb R}
\def\C{\mathbb C}
\def\Z{\mathbb Z}
\def\F{\mathbb F}
\def\X{\mathbb X}
\def \bbcp{\mathbb C\mathbb P}
\newcommand{\Lin}{\mathrm{Lin}}
\newcommand{\PbP}{ \bbcp^2\#\,\overline{ \bbcp}\,\!^2}
\newcommand{\Mucc}{M_{\mu,c_1,c_2}}
\newcommand{\omucc}{\omega_{\mu,c_1,c_2}}
\newcommand*{\quot}[2]%
{\ensuremath{%
   \raisebox{.35ex}{\ensuremath{#1}}\big/\raisebox{-.35ex}{\ensuremath{#2}}}}
\begin{document}

\title[Hamiltonian loops in the kernel of Seidel's representation]{Non-contractible Hamiltonian loops in the kernel of Seidel's representation}

\author{S\'ilvia Anjos and R\'emi Leclercq}

\date{\today}

\thanks{The authors would like to thank Dusa McDuff for her interest and useful discussions. 
The first author is partially funded by FCT/Portugal through project UID/MAT/04459/2013 and project EXCL/\-MAT-GEO/\-0222/\-2012. 
The second author is partially supported by ANR Grant ANR-13-JS01-0008-01.}

\address{SA: Center for Mathematical Analysis, Geometry and Dynamical Systems, Mathematics Department,
Instituto Superior T\'ecnico, Av. Rovisco Pais, 1049-001 Lisboa, Portugal}
\email{sanjos@math.ist.utl.pt}
\address{RL: Laboratoire de Math\'ematiques d'Orsay, Univ. Paris-Sud, CNRS, Universit\'e Paris-Saclay, 91405 Orsay, France.}
\email{remi.leclercq@math.u-psud.fr}

\subjclass[2010]{Primary 53D45; Secondary 57S05, 53D05}
\keywords{symplectic geometry, Seidel morphism, toric symplectic manifolds, Hirzebruch surfaces}

\begin{abstract}
The main purpose of this note is to exhibit a Hamiltonian diffeomorphism loop undetected by the Seidel morphism of a 1--parameter family of 2--point blow-ups of $S^2 \times S^2$, exactly one of which being monotone. As side remarks, we show that Seidel's morphism is injective on all Hirzebruch surfaces, and discuss how to adapt the monotone example to the Lagrangian setting.
\end{abstract}

\maketitle

\section{Introduction}\label{sec:introduction}

The motivation for this work is the search of homotopy classes of loops of Hamiltonian diffeomorphisms which are not detected by Seidel's morphism. Given a symplectic manifold $(M,\omega)$, and its Hamiltonian diffeomorphism group $\ham(M,\omega)$, recall that Seidel's morphism
\begin{align*}
  \m S \co \pi_1(\ham(M,\omega)) \rightarrow QH_*(M,\omega)^\times
\end{align*}
was defined on a covering of $\pi_1(\ham(M,\omega))$ by Seidel in \cite{Seidel97} for strongly semi-positive symplectic manifolds and then on the fundamental group itself and for any closed symplectic manifold by Lalonde--McDuff--Polterovich in \cite{LMcDP99}.

The target space, $QH_*(M,\omega)^\times$, is the group of invertible elements of the quantum homology of $(M,\omega)$. More precisely, the small quantum homology of $(M,\omega)$ is $QH_*(M,\omega) = H_*(M;\bb Z) \otimes \Pi$ where $\Pi = \Piuniv[q,q^{-1}]$ with $q$ a degree 2 variable and the ring $\Piuniv$ consisting of generalized Laurent series in a degree 0 variable $t$:
\begin{align}
  \label{eq:PiUniv}
   \Pi^{\mathrm{univ}}:= \left\{ \left. \sum_{\kappa \in \R} r_\kappa t^\kappa \,\right|\, r_\kappa \in \Q, \mbox{ and } \forall c \in \bb R, \, \#\{ \kappa > c\mid r_\kappa \neq 0\} < \infty \right \} \,.
\end{align}

Since its construction, Seidel's morphism has been successfully used to detect many Hamiltonian loops (see e.g \cite{McDuff10} and references therein), and was extended or generalized to various situations (see e.g \cite{Hutchings}, \cite{Savelyev}, \cite{HuLalonde}, \cite{HuLalondeLeclercq}, \cite{FOOO11}). A particular extension consists of secondary-type invariants, whose construction is based on Seidel's construction after enriching Floer homology by considering Leray--Serre spectral sequences introduced by Barraud--Cornea \cite{BarraudCornea07}, and which should detect loops undetected by Seidel's morphism \cite{BarraudCornea}. However, there were no Hamiltonian loops with non-trivial homotopy class known to be undetected by Seidel's morphism (as far as we know). This short note intends to provide the first example of such a loop on a family of symplectic manifolds. Moreover, the example is explicit and thus can easily be used to test other constructions. Notice finally that this example can also be used to construct other examples (e.g by products, see \cite{L09}).

\subsection*{First try: Symplectically aspherical manifolds}\label{sec:first-try}

Looking for elements in the kernel of the Seidel morphism, one might first consider symplectically aspherical manifolds, by which we mean that both the symplectic form and the first Chern class vanish on the second homotopy group of the manifold. Indeed, such manifolds have trivial Seidel morphism.

The geometric reason for this is that, by construction, the Seidel morphism of $(M,\omega)$ counts pseudo-holomorphic section classes of a fibration over $S^2$ with fiber $(M,\omega)$. The difference between two such classes is thus given by elements of $\pi_2(M)$ admitting a pseudo-holomorphic representative, whose existence is prevented by symplectic asphericity.

Alternatively, this can be proved via purely algebraic methods, using the equivalent description of Seidel's morphism, as a representation of $\pi_1(\ham(M,\omega))$ into the Floer homology of $(M,\omega)$. Given a loop of Hamiltonian diffeomorphisms, one gets an automorphism of $HF_*(M,\omega)$ which can be shown to act trivially by playing around with the following facts:
\begin{enumerate}[(i)]
\item Morse homology (the quantum homology of symplectically aspherical manifolds) is a ring over which Floer homology is a module.
\item All involved morphisms (PSS, Seidel, continuation) are module morphisms.
\item Any automorphism of Morse homology preserves the fundamental class, since it generates the top degree homology group.
\item The fundamental class is the unit of the Morse homology ring.
\end{enumerate}
This line of ideas, which goes back to Seidel, has been used by McDuff--Salamon in \cite{McDuffSalamon04} to simplify Schwarz's original proof of invariance of spectral invariants. It has then been adapted by Leclercq in \cite{L08} to Lagrangian spectral invariants and to prove the triviality of the relative (i.e Lagrangian) Seidel morphism by Hu--Lalonde--Leclercq in \cite{HuLalondeLeclercq} (see Lemma 5.5).

Now, even though aspherical manifolds seem to be ideal candidate, there are no homotopically non-trivial loops of Hamiltonian diffeomorphisms known to the authors in such manifolds...

\subsection*{Second try: Symplectic toric manifolds}\label{sec:second-try}

Symplectic toric geometry provides a large class of natural examples of symplectic manifolds which are complicated enough to be interesting while simple enough so that many rather involved constructions can be explicitly performed. In \cite{AnjosLeclercq14}, we computed the Seidel morphism on NEF toric 4--manifolds following work of McDuff and Tolman \cite{McDuffTolman06}. Recall that by definition $(M,J)$ is a NEF pair if there are no $J$--pseudo-holomorphic spheres in $M$ with negative first Chern number.
This gave, \textit{in the particular case of 4--dimensional toric manifolds}, an elementary and somehow purely symplectic way to perform these computations previously obtained by Chan, Lau, Leung, and Tseng \cite{CLLT} (and using works by Fukaya, Oh, Ohta, and Ono \cite{FOOO10}, and Gonz\'alez and Iritani \cite{GonzalesIritani11}). We also showed that one could then deduce the Seidel morphism of some non-NEF symplectic manifolds and, as an example, we made explicit computations for some Hirzebruch surface.

The easiest symplectic toric 4--manifolds for which we can exhibit a non-trivial element in the kernel of the Seidel morphism are 2--point blow-ups of $S^2 \times S^2$. More precisely, start with the monotone product $(S^2 \times S^2, \omega_1)$\footnote{traditionally, $\omega_\mu$ denotes the product symplectic form with total area $\mu \geq 1$ on the first factor and area $ 1$ on the second one} on which we perform two blow-ups. Notice that the resulting symplectic manifold is monotone only when the respective sizes of the blow-ups coincide \textit{and are equal to $\tfrac{1}{2}$}.

In Section \ref{sec:2-point-blowup}, we exhibit a specific loop of Hamiltonian diffeomorphisms whose homotopy class is in the kernel of Seidel's morphism if and only if the size of the two blow-ups coincide. Since this loop, obtained from two circle actions, can easily be seen to be non-trivial (in \cite{AnjosPinsonnault13}, Anjos and Pinsonnault computed the rational homotopy of symplectomorphism groups of these manifolds), this obviously yields a family of symplectic manifolds, only one of which being monotone, with non-injective Seidel morphism, i.e
\begin{theo}\label{theorem:3pt-blowup}
The Seidel morphism of the 2--point blow-ups of  $(S^2 \times S^2, \omega_1)$ with blow-ups of equal (arbitrary) sizes is not injective. 
\end{theo}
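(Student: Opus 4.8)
The plan is to realize the kernel element directly among the Hamiltonian circle actions carried by the toric structure, and to detect its triviality under $\mathcal S$ via the explicit computation of Seidel's morphism for NEF toric $4$-manifolds obtained in \cite{AnjosLeclercq14}. I would first fix a toric model for the family $M_{c_1,c_2}$ --- the blow-up of $(S^2\times S^2,\omega_1)$ at two balls of capacities $c_1$ and $c_2$ --- as the toric surface whose moment image is the unit square with two of its corners truncated by $c_1$ and $c_2$. This hexagon has six facets, hence six distinguished circle subgroups $\Lambda_1,\dots,\Lambda_6$ of the torus. Since $QH_*(M_{c_1,c_2})$ is commutative, $\xi\mapsto\mathcal S(\Lambda_\xi)$ is a homomorphism from $\pi_1(T^2)=\Z^2$ to $QH_*(M_{c_1,c_2})^\times$, so the restriction of $\mathcal S$ to the torus fails to be injective exactly when the relevant $\mathcal S(\Lambda_i)$ become multiplicatively dependent. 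I would take for $\gamma$ the corresponding product of two circle actions and aim to show that $\mathcal S(\gamma)=\id$ precisely when $c_1=c_2$.

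The heart of the argument is the evaluation of the $\mathcal S(\Lambda_i)$ in $QH_*(M_{c_1,c_2})^\times$. By \cite{AnjosLeclercq14} (building on \cite{McDuffTolman06}), whenever $M_{c_1,c_2}$ is NEF each $\mathcal S(\Lambda_i)$ has leading term the divisor class $[D_i]$ weighted by the powers of $q$ and $t$ dictated by the support number of the $i$-th facet, the remaining quantum corrections being governed by the combinatorics of the polytope. For the parameter values at which $M_{c_1,c_2}$ fails to be NEF I would recover the Seidel elements from the NEF region by the deformation argument already used for Hirzebruch surfaces in \cite{AnjosLeclercq14}. Multiplying the two relevant factors, I expect the parameter dependence of $\mathcal S(\gamma)$ to concentrate in a single power of $t$ measuring $c_1-c_2$, so that $\mathcal S(\gamma)=\id$ if and only if $c_1=c_2$; specialising to $c_1=c_2=c$ (the monotone surface occurring at $c=\tfrac12$) places $\gamma$ in $\ker\mathcal S$.

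It then remains only to check that $\gamma$ is not already contractible in $\ham(M_{c,c})$, for otherwise the statement would be empty. This is the one external input: the computation by Anjos and Pinsonnault \cite{AnjosPinsonnault13} of the rational homotopy of the relevant symplectomorphism groups shows that the two circle actions making up $\gamma$ are homotopically independent, so that the class of $\gamma$ survives rationally and $[\gamma]\neq 0$ in $\pi_1(\ham(M_{c,c}))$. I expect the main obstacle to lie not here but in the previous step: controlling the quantum corrections to the $\mathcal S(\Lambda_i)$ precisely enough to conclude that the product is \emph{exactly} the unit when $c_1=c_2$ --- rather than merely agreeing to leading order --- and dealing uniformly with the non-NEF values of the parameters, for which the clean toric formula is unavailable and the deformation argument must be invoked.
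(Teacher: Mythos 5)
Your overall strategy coincides with the paper's: both arguments locate the kernel element among the toric circle actions $\Gamma_i$ attached to the facets of the (hexagonal) moment polytope, evaluate the Seidel elements via McDuff--Tolman \cite[Theorem 1.10]{McDuffTolman06} (in the formalism of \cite{AnjosLeclercq14}), and quote Anjos--Pinsonnault \cite{AnjosPinsonnault13} for the non-triviality of the loop in $\pi_1(\ham(M_{1,c_1,c_2}))$. However, there is a genuine gap at the central step, and you have misidentified where the difficulty lies. First, the candidate loop: the product of the two circle actions, $x_0+y_0$ with $x_0=\Gamma_2$ and $y_0=\Gamma_1$, is \emph{never} in the kernel. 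Indeed, the toric complex structure here is Fano (it is the degree 6 del Pezzo surface, for \emph{all} admissible $c_1,c_2$), so each Seidel element is a single term with no quantum corrections: $\m S(y_0)=u\,t^{\mu-\epsilon_2}$ and $\m S(x_0)=\m S(\Gamma_5)^{-1}=v^{-1}t^{\epsilon_1-1}$, where $u=(F-E_2)\otimes q$ and $v=(B-E_2)\otimes q$. At $\mu=1$ one has $\epsilon_1=\epsilon_2$, the $t$-powers cancel \emph{identically in $c_1,c_2$}, and $\m S(x_0+y_0)=uv^{-1}\neq \id$ since $u\neq v$ in the free $\Pi$-module $QH_*$. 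So your expectation that ``the parameter dependence of $\m S(\gamma)$ concentrates in a single power of $t$ measuring $c_1-c_2$'' fails: no choice of weights detects $c_1-c_2$ at this level, and your $\gamma$ would have to be replaced by the \emph{doubled} loop $2(x_0+y_0)$, whose Seidel element is $u^2v^{-2}$.

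Consequently the real work --- entirely absent from your proposal --- is to decide whether $u^2=v^2$ in the ring $\Piuniv[u,v]/I_{1,c_1,c_2}$, using the Entov--Polterovich presentation of the quantum homology \cite{EntovPolterovich08}: the dependence on $c_1,c_2$ sits in the ideal $I_{1,c_1,c_2}$ (whose relations carry coefficients $t^{-c_2}$, $t^{c_1-2-c_2}$, etc.), not in the exponents attached to the Seidel elements. The paper manipulates the two relations to show $u^2=v^2 \Leftrightarrow (u-v)(t^{c_1}-t^{c_2})=0 \Leftrightarrow c_1=c_2$ (invertibility of $t^{c_1}-t^{c_2}$ when $c_1\neq c_2$ being the final point). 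Two further corrections: your worry about non-NEF parameter values and a deformation argument is unfounded --- Fano-ness of the fixed toric complex structure makes the one-term formula valid throughout the family, so the Hirzebruch-type deformation from \cite{AnjosLeclercq14} is never invoked; and the restriction to $\mu=1$ is essential, since for $\mu>1$ one has $\epsilon_1\neq\epsilon_2$ and the $t$-powers no longer cancel, which is why the theorem concerns blow-ups of $(S^2\times S^2,\omega_1)$ only. Your non-triviality step, via the $\bb Z^5$ (resp.\ $\bb Z^4$ when $c_1=c_2$) computation of $\pi_1(\ham)$ in \cite{AnjosPinsonnault13}, is correct as stated, provided it is applied to $2(x_0+y_0)$.
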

 
In our search for undetected Hamiltonian loops, we realized that 
\begin{theo}\label{theorem:Hirzebruch}
  Seidel's morphism is injective on all Hirzebruch surfaces.
\end{theo}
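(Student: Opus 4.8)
The plan is to reduce the statement to a finite, explicit computation of Seidel elements of Hamiltonian circle actions, exploiting that the fundamental groups of the relevant Hamiltonian groups are already known and are generated by such actions. Recall first that, up to rescaling and symplectomorphism, every symplectic Hirzebruch surface is either $(S^2\times S^2,\omega_\mu)$ for some $\mu\geq 1$ or a one--point blow--up $(\PbP,\omega)$; these are exactly the rational ruled surfaces, whose symplectomorphism groups were analyzed by Abreu and McDuff and whose fundamental groups $\pi_1(\ham{(M,\omega)})$ are computed (see also \cite{AnjosPinsonnault13}). In each case this group is finitely generated abelian, with a free part whose rank reflects the number of inequivalent toric (Hirzebruch) structures carried by the manifold, and a $2$--torsion part that is present only in the $S^2\times S^2$ family (already in the monotone product). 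Crucially, in each case a generating set is realized by Hamiltonian $S^1$--actions. Since $\m S$ is a homomorphism, it then suffices to evaluate it on these generators and to check that the resulting units of $\QH_*(M,\omega)^\times$ generate a subgroup isomorphic to $\pi_1(\ham{(M,\omega)})$, i.e. that $\ker \m S=0$.

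The engine for the evaluation is the formula of McDuff and Tolman \cite{McDuffTolman06}, which expresses the Seidel element of a Hamiltonian circle action through its maximal fixed component and the isotropy weights there, with no higher--order quantum corrections in the NEF case. For the NEF representatives this applies directly, and for the non--NEF Hirzebruch surfaces the required Seidel elements were already made explicit in \cite{AnjosLeclercq14}. Running this computation on the chosen generators produces, for each surface, a short explicit list of units of the form $[F]\otimes q^{d}t^{\kappa}+(\text{lower order in }t)$, where $[F]$ is the leading fixed--point homology class.

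Injectivity then becomes a separation argument inside the graded group $\QH_*(M,\omega)^\times$. The free part is detected by genuine homomorphisms $\pi_1(\ham{(M,\omega)})\to\R$ that automatically vanish on torsion: the $q$--degree (a Maslov--type index) is additive under the quantum product, and the top--exponent valuation $\nu\big(\sum_\kappa a_\kappa t^\kappa\big)=\max\{\kappa\mid a_\kappa\neq 0\}$ provides a second such invariant. Computing these on the generators coming from the distinct toric structures shows that no nonzero integer combination can map to $\id$, because those actions have independent $(d,\kappa)$. The $2$--torsion must be handled separately: for the monotone product the two generators are the full rotations of the two spheres, whose Seidel elements have leading classes the distinct non--unit homology classes $[\{\mathrm{pt}\}\times S^2]$ and $[S^2\times\{\mathrm{pt}\}]$; each squares to $\id=[M]$ while being $\neq \id$, and their product is again a nontrivial involution, so $\m S$ is injective on the torsion part as well. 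Combining the two separations yields $\ker\m S=0$.

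The main obstacle is precisely this last point, namely ruling out that a nontrivial combination of the torsion generators with the free ones lands on the identity. The degree and valuation homomorphisms annihilate all torsion simultaneously, so they cannot by themselves distinguish the order--two classes; one must retain the leading homology coefficient and verify that the involutions produced by $\m S$ are genuine nontrivial roots of unity in $\QH^\times$ that remain independent of the free generators. This is exactly the feature that fails in Theorem \ref{theorem:3pt-blowup}: there the distinguished loop is a \emph{difference} of two circle actions of equal blow--up size, whose leading fixed--point contributions cancel, so that its Seidel element collapses to $\id$; on the Hirzebruch surfaces no such cancellation occurs and every generator survives, which is what gives injectivity.
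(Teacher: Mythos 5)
Your overall reduction---identify the Hirzebruch surfaces with the rational ruled surfaces, use the Abreu--McDuff computation of $\pi_1(\ham(M,\omega))$, note that the generators are circle actions, and evaluate $\m S$ on them via McDuff--Tolman and \cite{AnjosLeclercq14}---is exactly the paper's starting point. But the ``separation argument'' you then run inside $\QH_*(M,\omega)^\times$ has a genuine gap, in fact two. First, every Seidel element here is homogeneous of degree $\dim M = 4$ (it has the shape $A\otimes q\,t^\kappa$ with $\deg A = 2$), and the quantum product of two degree--$4$ classes on a $4$--manifold is again of degree $4$; so any $q$--degree or Maslov--type index is \emph{constant} on the image of $\m S$ and detects nothing. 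Second, and fatally, the top--exponent valuation $\nu$ is not a homomorphism on the unit group: it is only subadditive, and additivity fails precisely on the elements in play, because the quantum relations $u^2 = \id\, t^{-1}$, $v^2 = \id\, t^{-\mu}$ (the classical products vanish, so leading terms drop) create leading--order zero divisors. Concretely, on $(S^2\times S^2,\omega_\mu)$ with $\mu>1$ one has $\m S(\Lambda^2_{e_1}) = (u+v)t^{1/2-\epsilon}$ with $\nu = \tfrac12-\epsilon$, while $\m S(\Lambda^2_{e_1})^{-1} = (u-v)\,t^{1/2+\epsilon}/(1-t^{1-\mu})$ has $\nu = \tfrac12+\epsilon$, so $\nu(\m S)+\nu(\m S^{-1}) = 1 \neq 0 = \nu(\id)$. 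Hence your key step---``no nonzero integer combination can map to $\id$ because those actions have independent $(d,\kappa)$''---is not a valid deduction: whether cancellations occur in high powers is exactly the content of the theorem and has to be checked inside the ring.

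This is what the paper actually does, and your proposal supplies no substitute. In the even case it expands $\m S(\Lambda^2_{e_1})^{\pm\ell}$ by the binomial theorem in $\Piuniv[u,v]/\langle u^2=t^{-1}, v^2=t^{-\mu}\rangle$ and uses positivity of the binomial coefficients to show every such power has \emph{two} nonvanishing components ($C_1 u + C_2 v$ or $C_1 + C_2\, uv$), hence is never of one of the four monomial shapes $t^a$, $ut^a$, $vt^a$, $uvt^a$ realized by $\m S(\varepsilon_1\Lambda^0_{e_1}+\varepsilon_2\Lambda^0_{e_2})$---this is precisely the torsion--free mixing verification your last paragraph flags as ``the main obstacle'' but then leaves as an assertion. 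Worse, you give no argument at all for the odd Hirzebruch surfaces, where $\pi_1(\ham(\F_{2k-1},\omega'_\mu))\simeq\Z$ is torsion--free and your two proposed homomorphisms are the only tool: there one must show $u^\ell t^{\ell\varepsilon}\neq \id$ in $\Piuniv[u]/(u^4t^{2\mu}+u^3t^{\mu}-t^{-1})$, and $\nu$--additivity fails again (e.g. $\nu(u\cdot u^3)=-\mu\neq \nu(u)+\nu(u^3)=0$). The paper settles this by a genuinely arithmetic step with no analogue in your outline: a putative relation $u^{\ell_0}t^{\ell_0\varepsilon}=1$ is specialized at $t=1$ to $u^{\ell_0}-1=(u^4+u^3-1)Q(u)$ in $\Q[u]$, which is impossible because no root of $u^4+u^3-1$ is a root of unity. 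Your closing heuristic (``on the Hirzebruch surfaces no such cancellation occurs'') is the statement to be proven, not a proof of it.
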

While this is not hard to prove and might be well-known to experts, we did not find it in the literature and thus include a proof in Section \ref{sec:hirzebruch-surfaces}.

\subsection*{Discussion on the adaptation to the Lagrangian setting}\label{sec:disc-adapt-lagr}

As mentionned above, there is a relative (i.e Lagrangian) version of the Seidel morphism defined by Hu--Lalonde in \cite{HuLalonde} and further studied by Hu--Lalonde--Leclercq in \cite{HuLalondeLeclercq}. There are two ways to adapt the example of Theorem \ref{theorem:3pt-blowup} to the Lagrangian setting which we discuss here. (However, in order to keep this note short -- and without too many technical details on the standard tools involved here --, we will not investigate these ideas further on here.)

First, let us remark that to get the Lagrangian version of the Seidel morphism, we need to consider a monotone Lagrangian of minimal Maslov at least 2. So, in what follows, we have in mind the only monotone symplectic manifold of the family mentionned above, i.e the monotone product $S^2 \times S^2$ with area of each factor equals to 1 on which we perform two blow-ups of size $\tfrac{1}{2}$.

$\bullet$~\textit{The first way to relate absolute and relative settings} is to consider the diagonal of the symplectic product. More precisely, let $(M,\omega)$ be a monotone symplectic manifold. The diagonal $\Delta \simeq M$ is a monotone Lagrangian of the product $(M \times M,\omega \oplus (-\omega))$, which we denote $(\widehat M,\widehat \omega)$ for short, with minimal Maslov number equal to twice the minimal first Chern number of $(M, \omega)$ and thus greater than or equal to 2. This allows us to consider the Lagrangian Seidel morphism:
\begin{align*}
  \m S_\Delta \co \pi_1(\ham(\widehat M,\widehat \omega),\ham_\Delta(\widehat M,\widehat \omega)) \rightarrow QH_*(\Delta)^\times 
\end{align*}
where $\ham_\Delta$ denotes the subgroup of $\ham$ formed by Hamiltonian diffeomorphisms which preserve $\Delta$ and $QH_*(\Delta)$ denotes the Lagrangian quantum homology of $\Delta$.

An element $\phi \in \pi_1(\ham(M,\omega))$ generated by the Hamiltonian $H \co M \times [0,1] \rightarrow \bb R$, induces $\widehat \phi \in \pi_1(\ham(\widehat M,\widehat \omega),\ham_\Delta(\widehat M,\widehat \omega))$, generated by $\widehat F = F \oplus 0 \co \widehat M \times [0,1] \rightarrow \bb R$. To get an element in the kernel of the Lagrangian Seidel morphism, it only remains to prove that:
\begin{enumerate}[(i)]
\item $\m S(\phi) = \m S_\Delta (\widehat \phi)$ in $QH_*(M,\omega) \simeq QH_*(\Delta)$, $\;$ and $\quad$ (ii)~$\widehat \phi$ is non zero.
\end{enumerate}
Note that in (i), not only the quantum homologies are canonically identified but the chain complexes themselves coincide and this identification agrees with the PSS morphisms in the following sense:
\begin{align*}
  \xymatrix{\relax
    QH_*(M,\omega) \ar@{=}[r] \ar[d]_{\mathrm{PSS}} & QH_*(\Delta) \ar[d]^{\mathrm{PSS}} \\
    HF_*(H,J) \ar@{=}[r] & HF_*(\widehat H, \widehat J :\Delta) 
  }
\end{align*}
as proved in the monotone setting by Leclercq--Zapolsky in \cite{LeclercqZapolsky} ($J$ denotes an almost complex structure on $M$, compatible with and tamed by $\omega$, while $\widehat J$ denotes an almost complex structure on $\widehat M$ adapted to $J$). This makes us believe that (i) can be straigthforwardly shown to hold.

On the other hand, proving (ii) will require some other technique.

$\bullet$~\textit{The second way to the Lagrangian setting} is to use Albers's comparison map between Hamiltonian and Lagrangian Floer homologies from \cite{Albers}, denoted below by $\m A$, which relates the absolute and relative Seidel morphisms via the following commutative diagram (see \cite{HuLalonde}):
\begin{align*}
  \xymatrix{\relax
    \pi_1(\ham(M,\omega)) \ar[r] \ar[d]_{\m S} & \pi_1(\ham(M,\omega),\ham_L(M,\omega)) \ar[r] \ar[d]^{\m S_L} & \pi_0(\ham_L(M,\omega)) \\
    HF_*(M,\omega) \ar[r]_{\m A} & HF_*(M,\omega;L) &
  }
\end{align*}
where $L$ is a closed monotone Lagrangian of $(M,\omega)$ with minimal Maslov number at least 2.

To get an interesting example via this method, one has to choose $L$ such that $HF_*(M,\omega;L) \neq 0$ and to prove (again) that the image of $\phi \in \pi_1(\ham(M,\omega))$ in $\pi_1(\ham(M,\omega),\ham_L(M,\omega))$ is non-trivial.


\section{Background and user manual for Sections \ref{sec:hirzebruch-surfaces} and \ref{sec:2-point-blowup}}
\label{sec:background}

In order to prove Theorems \ref{theorem:3pt-blowup} and \ref{theorem:Hirzebruch} in the following sections, we need to describe the setting and give some information whose nature we now explain. We also give some details about previous works on which it relies.

\textbf{A. Geometric setting.} We will first introduce the symplectic toric 4--manifold $(M,\omega)$ in which we are interested and describe the associated circle actions, moment map, and polytope. Then we will give topological information which will be useful: 
\begin{itemize}
\item the fundamental group of $\mathrm{Ham}(M,\omega)$, on  which the Seidel morphism is defined, and
\item the second homology group of $M$, which consists of generators of the quantum homology of $(M,\omega)$ (as a module over the Novikov ring).
\end{itemize}

\textit{Background for A. (see da Silva \cite{Cannas} for more details).} 
First, consider a Hamiltonian circle action on $(M, \omega)$. It is generated by a function $\phi: M \to \R$, called the \textit{moment map}, which is assumed to be normalized, that is, satisfying $\int_M \phi \, \omega^n=0$.

Now  $(M,\omega)$ is called \textit{toric} if it admits an effective action by a Hamiltonian torus $\bb T^2 \subset \mathrm{Ham}(M,\omega)$. We will denote by $\Phi$ the corresponding moment map and by $P = \Phi(M)$ the moment polytope. If $\eta$ is an outward primitive normal to the facet $D_\eta$ of $P$, we consider the associated Hamiltonian circle action, $\Gamma_\eta$,  whose moment map is $\phi := \langle \eta, \Phi ( \cdot ) \rangle$.\footnote{To lighten the notation, we will actually denote by $D_i$ and $\Gamma_i$, respectively,  the facet and the circle action associated to the normal $\eta_i$ (instead of $D_{\eta_i}$ and $\Gamma_{\eta_i}$).} 

Note that $\phi^{-1}(D_\eta)$ is a {\it semifree} maximum component for $\Gamma_\eta$, as the action is semifree (i.e. the  stabilizer of every point is trivial or the whole circle) on some neighborhood of $\phi^{-1}(D_\eta)$.

\textbf{B. The Seidel morphism.} In a second step, we will give the expression of the image of the aforementioned circle actions (the $\Gamma_\eta$'s) via the Seidel morphism, $\m S$.

\textit{Background for B. (see McDuff--Tolman \cite{McDuffTolman06} and Anjos--Leclercq \cite{AnjosLeclercq14}).}
We consider a toric 4--manifold $(M, \omega, \Phi)$ as above. To compute the image of a Hamiltonian circle action via the Seidel morphism, we pick a $\omega$--compatible, $\bb S^1$--invariant almost complex structure, $J$. The main case we are concerned with here is the Fano case. Recall that $(M,J)$ is said to be \textit{Fano} if any $J$--pseudo holomorphic sphere in $M$ has positive first Chern number. 

When this is the case, Theorem 1.10 from \cite{McDuffTolman06} or 4.5 from \cite{AnjosLeclercq14} tells us that the associated Seidel element consists of only one term (the one of highest order). More precisely,

\begin{theo}(\cite[Theorem1.10]{McDuffTolman06})\label{theo:main}
Let $(M, \omega, J, \Phi)$ be a compact Fano toric symplectic 4--manifold. Let $\eta$ be an outward primitive normal to the facet $D_\eta$ of the moment polytope $P$ and let $\Gamma_\eta$ be the associated Hamiltonian circle action. Then
$$ \m S (\Gamma_\eta)= [ F_{\rm{max}}] \otimes q t^{\phi_{\rm{max}}}$$
where $\phi$ is the moment map associated to $\Gamma_\eta$, $F_{\rm{max}}= \phi^{-1}(D_\eta)$ is the maximal fixed point component of $\phi$ and $\phi_{\rm{max}}= \phi (F_{\rm{max}})$.
\end{theo}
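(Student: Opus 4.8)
The plan is to read $\m S(\Gamma_\eta)$ off its definition as a count of pseudo-holomorphic sections of the Hamiltonian fibration $\pi \co E \to S^2$ associated with the loop $\Gamma_\eta$, the point being that the interplay between an $\bb S^1$--invariant almost complex structure and the Fano hypothesis collapses this count to a single term. First I would fix an $\omega$--compatible, $\bb S^1$--invariant almost complex structure $J$ on $M$ and let $J_E$ be the almost complex structure on $E$ restricting to $J$ on the fibers and compatible with $\pi$. Writing $\sigma$ for a section class of $\pi$, and $c_1^{\mathrm{vert}}(\sigma)$, $a(\sigma) = \langle [\omega_E], \sigma\rangle$ for its vertical Chern number and coupling area, the Seidel element reads
\begin{align*}
  \m S(\Gamma_\eta) = \sum_\sigma \big( \eva_*\, [\Mbar(E,\sigma;J_E)]^{\vir} \big) \otimes q^{-c_1^{\mathrm{vert}}(\sigma)}\, t^{a(\sigma)}\,,
\end{align*}
where $\eva$ is the evaluation at the fiber over a fixed point of $S^2$ and the moduli space of sections in class $\sigma$ has virtual dimension $2\big(c_1^{\mathrm{vert}}(\sigma) + n\big)$, with $2n = \dim_{\bb R} M = 4$.

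Next I would isolate the contribution of the maximum. Since $F_{\max} = \phi^{-1}(D_\eta)$ is a \emph{semifree} maximum component, the unique normal weight of $\Gamma_\eta$ along $F_{\max}$ equals $-1$; linearising the action near $F_{\max}$ identifies the vertical tangent bundle along the associated section $\sigma_{\max}$ with the sum of a trivial summand (the directions tangent to $F_{\max}$, fixed by the action) and a degree $-1$ line (the normal weight $-1$ direction), so that $c_1^{\mathrm{vert}}(\sigma_{\max}) = -1$ and hence the factor $q = q^{-c_1^{\mathrm{vert}}(\sigma_{\max})}$. A direct evaluation gives $a(\sigma_{\max}) = \phi_{\max}$, whence the factor $t^{\phi_{\max}}$. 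Moreover the $J_E$--holomorphic sections in the class $\sigma_{\max}$ are exactly the constant ones sitting over the points of $F_{\max}$, so $\Mbar(E,\sigma_{\max};J_E) \cong F_{\max}$ with $\eva$ the inclusion; a Riemann--Roch computation on $\bP^1$ shows the linearised operator is surjective (the normal $\cO(-1)$ summand carries no obstruction), so this moduli space is regular and of the expected dimension $2(c_1^{\mathrm{vert}}(\sigma_{\max}) + n) = \dim F_{\max}$. The contribution of $\sigma_{\max}$ is therefore exactly $[F_{\max}] \otimes q\, t^{\phi_{\max}}$.

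It remains to prove that every other section class contributes $0$, and this is where the Fano hypothesis is essential. A standard energy estimate bounds the coupling area of any $J_E$--holomorphic section by $\phi_{\max}$, so no class of area larger than $\phi_{\max}$ is represented and $\sigma_{\max}$ is the unique leading term; the remaining competing classes are of the form $\sigma_{\max} - B$ with $B \in H_2(M;\bb Z)$ effective, and the Fano condition then forces $c_1(B) > 0$. Consequently $c_1^{\mathrm{vert}}(\sigma_{\max} - B) = -1 - c_1(B)$ and the virtual dimension $2 - 2 c_1(B)$ falls strictly below $\dim F_{\max} = 2$; as soon as $c_1(B) \geq 2$ it is negative and the contribution vanishes for dimensional reasons. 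The delicate case --- and the main obstacle --- is $c_1(B) = 1$ (for instance an exceptional class), where the virtual dimension is $0$ and one must genuinely show that the count of $J_E$--holomorphic sections in class $\sigma_{\max} - B$ through a generic fiber point vanishes. I would handle this by exploiting the $\bb S^1$--invariance together with the semifree maximum structure to prove that no such section exists for the invariant $J_E$, so that the associated moduli space is empty. Granting this, the sum collapses to the single term $[F_{\max}] \otimes q\, t^{\phi_{\max}}$, as claimed.
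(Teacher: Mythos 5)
A preliminary remark: the paper contains no proof of this statement. It is quoted verbatim from McDuff--Tolman \cite[Theorem 1.10]{McDuffTolman06} and used as a black box (Step \textbf{B.} of Section \ref{sec:background}), so your attempt can only be measured against the argument of that reference. Your outline does recover its overall architecture, and your leading-term analysis is correct and standard: semifreeness gives normal weight $-1$ along $F_{\max}$, whence $c_1^{\mathrm{vert}}(\sigma_{\max})=-1$ and the factor $q$; the coupling area of the fixed sections through $F_{\max}$ is $\phi_{\max}$, giving $t^{\phi_{\max}}$; and $H^1(\bP^1,\cO\oplus\cO(-1))=0$ gives regularity of the stratum of fixed sections, of dimension $2=\dim F_{\max}$. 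This part is in substance McDuff--Tolman's semifree-maximum result (the statement that $\m S(\Gamma_\eta)=[F_{\max}]\otimes q\,t^{\phi_{\max}}$ \emph{plus lower order terms}), and your dimension count disposing of classes with $c_1(B)\geq 2$ is also fine.

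The genuine gap sits exactly where you write ``Granting this.'' The vanishing of the zero-dimensional contributions with $c_1(B)=1$ is not a residual verification: it is the entire content of the Fano case (the passage from ``highest order term plus lower order terms'' to the exact formula), and your proposal replaces it by the unproven assertion that the corresponding moduli spaces are empty for the invariant $J_E$. Two concrete problems. First, emptiness cannot follow ``softly'' from $\bb S^1$--invariance and semifreeness: one must control the full compactified moduli space of stable sections, whose strata contain a section component through a lower fixed component together with fiber bubbles. Already in class $\sigma_{\max}$ such nodal configurations exist --- for the rotation of $S^2$, the minimal fixed section plus a fiber is a stable map in class $\sigma_{\max}$ --- so your claim $\Mbar(E,\sigma_{\max};J_E)\cong F_{\max}$ is false for the compactification; it describes only the open stratum, and one needs the boundary strata to be of positive codimension or to contribute zero virtually, which again requires an analysis of \emph{all} invariant sections. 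Second, your reduction ``the remaining classes are $\sigma_{\max}-B$ with $B$ effective, so Fano gives $c_1(B)>0$'' is not automatic: the energy estimate only yields $\omega(B)\geq 0$, and effectiveness must be extracted by decomposing a stable section into its section component and fiber bubbles and by classifying the invariant holomorphic sections (each corresponding to a fixed point $x$, with area $\phi(x)$ and vertical Chern number determined by the weights at $x$). This classification, which McDuff--Tolman carry out using the toric structure of the total space and regularity properties of the toric complex structure, is precisely what makes the lower-order contributions vanish under the Fano hypothesis. Without it, your argument establishes only the leading term of $\m S(\Gamma_\eta)$, not the equality claimed in the theorem.
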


\textbf{C. The quantum homology of $(M,\omega)$.} The computation of the Seidel elements $\m S(\Gamma_\eta)$ in Step \textbf{B.} also gives us explicit relations involving the quantum product. This allows us to complete the description of the quantum homology as an algebra. 
Since the generators of $\pi_1(\mathrm{Ham}(M,\omega))$ can be expressed in terms of the $\Gamma_\eta$'s, this also gives us the image of the Seidel morphism so that, by understanding $\mathrm{im}(\m S) \subset QH_* (M, \omega)^\times$, we can prove Theorems \ref{theorem:3pt-blowup} and \ref{theorem:Hirzebruch}.

\textit{Background for C. (see McDuff--Tolman \cite[Section 5.1]{McDuffTolman06} for the general setting).} 
Let us recall how to obtain the quantum homology algebra in our specific setting. Let $D_1, \hdots, D_n$ be the facets of $P$ and $\eta_1, \hdots, \eta_n \in \R^2$ the respective outward primitive integral normal vectors. Let $C$ be the set of \textit{primitive sets}, i.e subsets $I= \{ i_1,i_2 \} \subset \{ 1, \hdots, n \}$ such that $D_{i_1} \cap D_{i_2} = \emptyset$. Let $u_i= [D_i] \otimes q$. There are two linear relations: 
\begin{equation*}
\sum_{i=1}^{n} \langle (1,0), \eta_i \rangle \, u_i = 0  \quad \mbox{and} \quad \sum_{i=1}^{n} \langle (0,1), \eta_i \rangle \, u_i = 0
\end{equation*}
which generate the ideal of linear relations $\Lin (P)$ in $\Q[u_1, \hdots, u_n]$. Moreover, relations between the normal vectors $\eta_i$'s yield equations satisfied by the corresponding Seidel elements $\m S(\Gamma_{i})$. Using these, it is then possible to exhibit  the quantum product $u_{i_1} * u_{i_2}$, for every primitive set $\{ i_1, i_2 \}$, as a linear combination of the classes $p$ (the class of a point), $\mathbbm{1}$ (the fundamental class), and $u_i$: $f_{i_1i_2} = (\alpha p  \otimes q^2 + \beta \mathbbm{1} + \sum \alpha_i u_i) \, t ^\gamma$ for some $\alpha, \beta, \alpha_i \in \Z$ and $\gamma \in \R$. 
Then, the Stanley--Reisner ideal is defined by 
\begin{equation*}
\SR_Y(P) = \langle  u_{i_1} * u_{i_2}- f_{i_1i_2} \ | \ \{ i_1, i_2 \}  \in C \rangle.
\end{equation*}
Finally, there is an isomorphism of $\Piuniv$--algebras
\begin{equation}\label{quantum-homology}
 \QH_* (M, \omega) \simeq \Q[u_1, \hdots, u_n] \otimes \Piuniv / (\Lin(P) + \SR_Y (P)) \,.
 \end{equation}


\section{Hirzebruch surfaces}
\label{sec:hirzebruch-surfaces}

We proceed in two steps as the ``even'' and ``odd'' Hirzebruch surfaces have to be dealt with separately. In the whole section, we follow the notation and conventions 
used in \cite{AnjosLeclercq14} (in particular in Section 5.3), most of them being recalled in Section \ref{sec:background} above.

\subsection{Even Hirzebruch surfaces}
\label{sec:even-hirz-surf}

Recall that the toric ``even'' Hirzebruch surfaces $(\F_{2k},\omega_{\mu})$, $0\leq k\leq \ell$ with $\ell \in \N$ and $\ell < \mu \le \ell+1$, can be identified with the symplectic manifolds $M_\mu=(S^{2}\times S^{2},\omega_\mu)$ where $\omega_\mu$ is  the split symplectic form with area $\mu \geq 1$ for the first $S^2$--factor, and with area 1 for the second factor. The moment polytope of $\F_{2k}$ is
$$P_{2k}=\left\{ (x_1,x_2) \in \R^2 \mid 0 \leq x_1 \leq 1, \ x_2+kx_1 \geq 0, \; \ x_2-kx_1 \leq \mu-k\right\}.$$ 
Let $\Lambda^{2k}_{e_1}$ and $\Lambda^{2k}_{e_2}$ represent the circle actions whose moment maps are, respectively, the first and second components of the moment map associated to the torus action $T_{2k}$ acting on $\F_{2k}$. We will also denote by $\Lambda^{2k}_{e_1}$ and $\Lambda^{2k}_{e_2}$ the corresponding generators in $\pi_1(T_{2k})$.

It is well known (see e.g \cite[Theorem 1.1 or Corollary 2.7]{AMcD}) that  for $k=0$,   $\pi_1(\ham(\bb F_{0},\omega_\mu))=\bb Z/2 \oplus \bb Z/2$ and  that for $k \geq 1$,  $\pi_1(\ham(\bb F_{2k},\omega_\mu))= \bb Z/2 \oplus \bb Z/2 \oplus \bb Z$. Moreover, the authors explain in \cite{AMcD} (see Section 2.5 and in particular Lemma 2.10) that the $\bb Z/2$ terms of the fundamental groups are respectively generated by $ \Lambda^{0}_{e_1}$ and $ \Lambda^{0}_{e_2}$, while the generator of the additional $\Z$ term is $ \Lambda^{2}_{e_1}$.

Let $B=[S^2 \times \{p\}]$ and $F=[\{p\} \times S^2] \in H_2( S^2 \times S^2; \Z)$ and denote $u=B\otimes q$  and $v=F \otimes q$ where $q$ is the degree 2 variable entering into play in the definition of $\Pi = \Piuniv[q,q^{-1}]$ and $\Pi^{\mathrm{univ}}$ the ring of generalised Laurent series defined by \eqref{eq:PiUniv}. 

We now gather from \cite{AnjosLeclercq14} the results we will need for the proof of Theorem \ref{theorem:Hirzebruch} in this case. First, in \cite[Section 5.3]{AnjosLeclercq14}, we computed the image of the generators $ \Lambda^{0}_{e_1}$, $ \Lambda^{0}_{e_2}$, and $ \Lambda^{2}_{e_1}$ by the Seidel morphism, $\m S$. Namely we obtained:
\begin{align*}\label{eq:Seidel}
  \m S(\Lambda^{0}_{e_1}) &= B \otimes qt^{\frac{1}{2}}=ut^{\frac{1}{2}}, \quad \m S(\Lambda^{0}_{e_2})=F \otimes qt^{\frac{\mu}{2}}=vt^{\frac{\mu}{2}}, \quad \mbox{and}
 \end{align*}
 \begin{equation}\label{Seidelelement}
  \m S(\Lambda_{e_1}^2) = (B+F) \otimes qt^{\frac{1}{2} -\epsilon}=(u+v)t^{\frac{1}{2} -\epsilon}  \quad \mbox{with } \epsilon=\frac{1}{6\mu} .
  \end{equation}
  Note that the circle action $\Lambda_{e_1}^2$  acts on the second Hirzebruch surface $\F_2$ and the almost complex structure in this case is not Fano, because the class $B-F$ is represented by a pseudo-holomorphic sphere and its first Chern number vanishes. Nevertheless, by Theorem 4.4 in \cite{AnjosPinsonnault13}, the Seidel element of this action still does not contain any lower order terms. 
 
The computation of the Seidel elements associated to each one of the facets of the polytope yield the following quantum product identities
 \begin{equation}\label{quantum-products}
  F * F = \mathbbm{1} \otimes q^{-2}t^{-\mu}, \quad B * B = \mathbbm{1} \otimes q^{-2}t^{-1}, \quad \mbox{and} \quad F*B=p \,
  \end{equation}
 so $S(\Lambda^{0}_{e_1})^2=S(\Lambda^{0}_{e_2})^2= \mathbbm{1}$.
Finally recall that, thanks to \cite[Proposition 5.1]{AnjosLeclercq14} (see \eqref{quantum-homology} in our setting), we were able to express  the (small) quantum homology  algebra as 
$$QH_*(\bb F_{2k},\omega_\mu) \simeq \Pi^{\mathrm{univ}}[u,v] / \langle u^2=t^{-1}, v^2=t^{-\mu} \rangle \,.$$  
From  \eqref{Seidelelement} and \eqref{quantum-products}, it is now easy to check that the inverse of $S(\Lambda_{e_1}^2)$ is given by
\begin{equation}\label{inverse}
\m S(\Lambda_{e_1}^2)^{-1} =(B-F) \otimes q \, \frac{t^{\frac{1}{2} +\epsilon}}{1-t^{1-\mu}}=(u-v) \, \frac{t^{\frac{1}{2} +\epsilon}}{1-t^{1-\mu}} \,.
\end{equation}
Let us now prove the theorem.

\begin{proof}[Proof of Theorem \ref{theorem:Hirzebruch} for even Hirzebruch surfaces]
  Since $\Lambda^{0}_{e_1}$ and $\Lambda^{0}_{e_2}$ are of order 2, any element in $\pi_1(\ham(\bb F_{2k},\omega_\mu))$ is of the form $\varepsilon_1\Lambda^{0}_{e_1} + \varepsilon_2\Lambda^{0}_{e_2} + \ell \Lambda^{2}_{e_1}$, with $\varepsilon_1$ and $\varepsilon_2$ in $\{ 0,1 \}$ and $\ell \in \bb Z$. Moreover, it is in the kernel of $\m S$ if and only if $\m S(\Lambda^{2}_{e_1})^{-\ell} = \m S(\Lambda^{0}_{e_1})^{\varepsilon_1}\m S(\Lambda^{0}_{e_2})^{\varepsilon_2}$ which is equivalent to the fact that $\m S(\Lambda^{2}_{e_1})^{-\ell}$ is either $u$, $v$, or $uv$, up to a power of $t$.

Let $\ell' \in \bb N\setminus \{0\}$, and expand the $\ell'$--th power of $\m S(\Lambda^{2}_{e_1})$ (whose expression is recalled in \eqref{Seidelelement} above) thanks to the binomial theorem to get
$$ S(\Lambda^{2}_{e_1})^{\ell'} = \sum_{k=0}^{\ell'} \left( \!\! \begin{array}{c} \ell' \\ k \end{array}  \!\! \right) u^k v^{\ell'-k} t^{(\frac12 -\epsilon)\ell'} \,.$$
The identities $u^2=t^{-1}$ and $v^2=t^{-\mu}$ ensure that $ S(\Lambda^{2}_{e_1})^{\ell'}$ is of the form $C_1\cdot u + C_2 \cdot v$ if $\ell'$ is odd, or $C_1 + C_2 \cdot uv$ otherwise, where (in both cases) $C_1$ and $C_2$ are linear combinations of powers of $t$ with positive rational coefficients (hence non zero). Thus $\varepsilon_1\Lambda^{0}_{e_1} + \varepsilon_2\Lambda^{0}_{e_2} + \ell \Lambda^{2}_{e_1} \notin \ker(\m S)$ for any $\varepsilon_1$ and $\varepsilon_2$ in $\{ 0 , 1 \}$ and $\ell < 0$. 

We proceed along the same lines for a positive $\ell$ : $\m S(\Lambda^{2}_{e_1})^{-\ell}$ is, by the binomial theorem together with \eqref{inverse}, of the form $$\frac{C'_1\cdot u - C'_2 \cdot v}{(1-t^{1-\mu})^{\ell}}\quad \mbox{or} \quad  \frac{C'_1 - C'_2 \cdot uv}{(1-t^{1-\mu})^{\ell}}$$ which shows that $\varepsilon_1\Lambda^{0}_{e_1} + \varepsilon_2\Lambda^{0}_{e_2} + \ell \Lambda^{2}_{e_1} \notin \ker(\m S)$ for any $\ell > 0$ either. 

This implies that the only elements of $\pi_1(\ham(\bb F_{2k},\omega_\mu))$ which could be in $\ker(\m S)$ are of the form $\varepsilon_1\Lambda^{0}_{e_1} + \varepsilon_2\Lambda^{0}_{e_2}$ so that in the end $\ker(\m S)=\{0\}$.
\end{proof}

\subsection{Odd Hirzebruch surfaces}\label{sec:odd-hirz-surf} 
Similarly, ``odd'' Hirzebruch surfaces $(\F_{2k-1},\omega_{\mu}')$, $1\leq k\leq \ell$ with $\ell \in \N$ and  $\ell < \mu \le \ell+1$, can be identified with the symplectic manifolds $(\PbP,\omega_\mu')$ where the symplectic area of the exceptional divisor is  $\mu >0$ and the area of the projective line is $\mu +1$. Its moment polytope is 
\begin{align*}
  \left\{ (x_1,x_2) \in \R^2 \left| 
      \begin{array}{l}
        0 \leq x_1+x_2 \leq 1, \ x_2(k-1)+kx_1 \geq 0,\\ \ kx_2+(k-1)x_1 \geq k -\mu-1
      \end{array}
\right. \right\} \,.
\end{align*}
Let $\Lambda^{2k-1}_{e_1}$ and $\Lambda^{2k-1}_{e_2}$ represent the circle actions whose moment maps are, respectively, the first and the second component of the moment map associated to the torus action $T_{2k-1}$ acting on $\F_{2k-1}$. As before, we will also denote by $\Lambda^{2k-1}_{e_1}$ and $\Lambda^{2k-1}_{e_2}$ the generators of $\pi_1(T_{2k-1})$. 

Similarly to the even case the fundamental group of $(\F_{2k-1},\omega_{\mu}')$ is computed in \cite[Theorem 1.4 or Corollary 2.7]{AMcD}. More precisely, $\pi_1(\ham(\F_{2k-1},\omega_{\mu}'))= \Z \langle \Lambda^{1}_{e_1} \rangle$  for all $k \geq 1$, that is,  $\Lambda^{1}_{e_1}$ is the generator of the fundamental group as explained in \cite[Section 2.5]{AMcD} (see in particular Lemma 2.11). So, in order to prove that the Seidel morphism is injective, we only need to show that the order of $\m S(\Lambda^{1}_{e_1})$ in $QH_*(\bb F_{2k+1},\omega_\mu')$ is infinite.

We now need to expand Remark 5.6 of \cite{AnjosLeclercq14} (which quickly dealt with the odd case), along the lines of \cite[Section 5.3]{AnjosLeclercq14} (where we focused with more details on the even case).
Let $B \in H_2(\PbP; \Z)$ denote the homology class of the exceptional divisor with self intersection $-1$ and  $F$  the class of the fiber of the fibration $\PbP \to S^2$. 
If we set $u_1=(B+F) \otimes q$, $u_2=u_4= F \otimes q$, and $u_3= B \otimes q$, clearly the additive relations are given by 
\begin{equation}\label{additive}
u_2 = u_4 \quad  \mbox{and} \quad  u_1=u_2 +u_3.
\end{equation}
The normal vectors to the moment polytope of $\F_1$ are given by $\eta_1=(1,1)$,  $\eta_2=(0,-1)$,  $\eta_3=(-1,-1)$, and $  \eta_4=(-1,0)$. We denote by $\Gamma_i$ the actions associated to $\eta_i$. 

As explained in Section \ref{sec:background}, since $\F_1$ is Fano, it follows from \cite[Theorem 1.10]{McDuffTolman06} that the Seidel elements associated to the $\Gamma_i$'s are given by 
  \begin{gather*}
  \m S(\Gamma_1) =(B +F) \otimes qt^{1+\mu -2\varepsilon}=u_1t^{1+\mu -2\varepsilon} , \quad \m S(\Gamma_2) = \m S(\Gamma_4)=F \otimes qt^{\varepsilon}=u_2t^{\varepsilon}, \\
  \m S(\Gamma_3) =B \otimes qt^{2\varepsilon-\mu}=u_3t^{2\varepsilon-\mu} \quad \mbox{with }  \varepsilon= \frac{3 \mu^2 + 3\mu +1}{3(1+2\mu)}.
 \end{gather*}

The relation $\eta_1+\eta_3=0$ yields $\m S(\Gamma_1) *  \m S(\Gamma_3) = \mathbbm{1}$, that is, $ B* (B+F) \otimes q^2 t = \mathbbm{1}$. Similarly,  since $\eta_2 +\eta_4=\eta_3$ it follows that $\m S(\Gamma_2) *  \m S(\Gamma_4) =  \m S(\Gamma_3) $ which is equivalent to $F*F = B \otimes q^{-1} t^{-\mu}$. Therefore the primitive relations are given by 
 \begin{equation}\label{primitive}
  u_1 u_3=t^{-1}\quad  \mbox{and} \quad u_2u_4=u_3t^{-\mu}.
  \end{equation}
Now, following Step \textbf{C.} of Section \ref{sec:background} above, we set $u= F\otimes q$ and deduce from the relations  \eqref{additive} and  \eqref{primitive} that
\begin{align}\label{QH-oddHirzebruch}
QH_*(\bb F_{2k+1},\omega_\mu') = \Pi^{\mathrm{univ}}[u] / (u^4 t^{2\mu} + u^3t^{\mu} - t^{-1}) \,.
\end{align}

Note that  the generator of $\pi_1(\ham(\F_{2k-1},\omega_{\mu}'))$, $\Lambda^{1}_{e_1}$, is the action associated to the vector $(1,0)$. We thus get that $\m S(\Lambda^{1}_{e_1}) = \m S(\Gamma_4)^{-1}$.

Now we can proceed with the proof of the theorem.

\begin{proof}[Proof of Theorem \ref{theorem:Hirzebruch} for odd Hirzebruch surfaces]
From the discussion above, we see that $\m S(\Lambda^{1}_{e_1})^{-1}=\m S(\Gamma_4) = u t^{\varepsilon}$. So, in order to show that Seidel's morphism is injective we only need to show that, for any $\ell \in \bb N \setminus \{0\}$, $\m S(\ell\Lambda^{1}_{e_1})^{-1} = u^\ell t^{\ell\varepsilon} \neq 1$.

First notice that the polynomial $M(u)=u^4 t^{2\mu} + u^3t^{\mu} - t^{-1} \in \Pi^{\mathrm{univ}}[u]$ in \eqref{QH-oddHirzebruch} above has invertible main coefficient, so that for any positive integer $\ell$, there exist uniquely determined polynomials $Q_\ell$ and $R_\ell$ such that $u^\ell t^{\ell\varepsilon}-1 = M(u)Q_\ell(u) + R_\ell(u)$ and degree of $R_\ell$ is less than degree of $M$. 

Assume that Seidel's morphism is not injective: then there exists $\ell_0 \in \bb N \setminus \{0\}$ such that $R_{\ell_0} = 0$. To determine the polynomial $Q_{\ell_0}$, we proceed to the long division of $u^{\ell_0} t^{{\ell_0}\varepsilon}-1$ by $M$ which consists in a finite number of (at most ${\ell_0} - 3$) steps. This ensures that the coefficients of $Q_{\ell_0}$ are \textit{finite} linear combinations of powers of $t$ (with rational coefficients). Thus $Q_{\ell_0}$ induces a polynomial in $\bb Q[u]$ when $t$ is set to 1, $Q^1_{\ell_0}$, which satisfies $u^{\ell_0} - 1 = (u^4 + u^3 -1) Q^1_{\ell_0}(u)$ in $\bb Q[u]$. Since the roots of $u^4  + u^3 - 1$ are not roots of unity, we get a contradiction.
Thus, there is no positive integer ${\ell_0}$ so that $u^{\ell_0} t^{{\ell_0}\varepsilon} = 1$ which concludes the proof.
\end{proof}


\section{2--point blow-ups of \texorpdfstring{$S^2\times S^2$}{Lg}}
\label{sec:2-point-blowup}

We now consider the manifold obtained from $(M_\mu, \omega_\mu) = (S^2 \times S^2,\omega_\mu)$ (see Section \ref{sec:even-hirz-surf}) by performing two successive symplectic blow-ups of capacities $c_1$ and $c_2$  with  $0 < c_2 \leq c_1 < c_1 + c_2 \leq 1 \leq \mu$, which we denote by $(\Mucc, \omucc)$. Let $B$, $F \in H_2(\Mucc;\Z)$ be the homology classes defined by  $B=[S^2 \times \{p\}]$, $F=[\{p\} \times S^2]$ and  let $E_i\in H_2(\Mucc;\Z)$ be the exceptional class corresponding to the blow-up of capacity $c_i$. 

\begin{rema}
There is an alternative description of this manifold as the 3--point blow-up of $\bbcp^2$. Indeed, consider $\X_3={\bbcp^2\#\,3\overline{ \bbcp}\,\!^2}$ equipped with the symplectic form $\omega_{\nu;\delta_{1},\delta_2,\delta_{3}}$  obtained from the symplectic blow-up of $(\bbcp^{2},\omega_{\nu})$ at 3 disjoint balls of capacities $\delta_1, \delta_2$ and $\delta_3$, where $\omega_{\nu}$ is the standard Fubini--Study form on $\bbcp^{2}$ rescaled so that $\omega_{\nu}(\bbcp^{1})=\nu$. Let $\{L,V_{1},V_2,V_{3}\}$ be the standard basis of $H_{2}(\X_{3};\Z)$ consisting of the class $L$ of a line together with  the classes $V_{i}$ of the exceptional divisors. It is well known that $\X_3$ is diffeomorphic to $\Mucc$. The diffeomorphism $\X_{3} \to \Mucc$ can be chosen to map the ordered basis $\{L,~ V_{1},~ V_{2},~ V_{3}\}$ to $\{B+F-E_{1},~ B-E_{1},~ F-E_{1},~E_{2}\}$. 
When one considers this birational equivalence in the symplectic category, uniqueness of symplectic blow-ups implies that $(\X_{3}, \omega_{\nu; \delta_{1}, \delta_{2}, \delta_{3}})$ is symplectomorphic, after rescaling, to $M_{\mu}$ blown--up with capacities $c_{1}$ and $c_{2}$, where $\mu=(\nu-\delta_{2})/(\nu-\delta_{1})$, $c_{1}=(\nu-\delta_{1}-\delta_{2})/(\nu-\delta_{1})$, and $c_{2}=\delta_{3}/(\nu-\delta_{1})$.
In \cite[Section 2.1]{AnjosPinsonnault13}, it is explained why it is sufficient to consider values of $c_1$ and $c_2$ in the range above:  $0 < c_2 \leq c_1 < c_1 + c_2 \leq 1 \leq \mu$.
\end{rema}

The quantum algebra of $(\Mucc, \omucc)$ was computed by Entov--Polterovich in \cite{EntovPolterovich08} (as $(\X_3,\omega_{\nu;\delta_{1},\delta_2,\delta_{3}})$, see the proof of Proposition 4.3). More precisely, setting $u=(F-E_2) \otimes q$ and $v=(B-E_2)\otimes q$, they proved that: 
\begin{lemm} As a $ \Pi^{\mathrm{univ}}$--algebra we have 
$$QH_*(\Mucc,\omucc) \cong \Pi^{\mathrm{univ}}[u,v] / I_{\mu, c_1, c_2}$$ where $I_{\mu, c_1, c_2}$ is the ideal generated by
\begin{align*}
&  u^2v^2 + u^2vt^{-c_2} = vt^{-\mu-c_2} + t^{c_1-\mu-1-c_2} \mbox{ and }\; \\
&  u^2v^2 + uv^2t^{-c_2} = ut^{-1-c_2} + t^{c_1-\mu-1-c_2} \,.
\end{align*}
\end{lemm}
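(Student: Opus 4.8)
The plan is to run the toric recipe of Step~\textbf{C} of Section~\ref{sec:background}, the same machine used in Section~\ref{sec:hirzebruch-surfaces}, now for the hexagonal polytope of $(\Mucc,\omucc)$. First I would fix the moment polytope: starting from the rectangle $[0,\mu]\times[0,1]$ of $M_\mu$, the two blow-ups of capacities $c_1$ and $c_2$ chop two corners off the length-$1$ side (which is why the admissible range imposes $c_1+c_2\le 1$), producing a hexagon with six facets $D_1,\dots,D_6$. I would read off the outward primitive normals $\eta_1,\dots,\eta_6$ and the associated classes --- among them $E_1$, $E_2$, $B-E_2$, and $F-E_1-E_2$ --- and record that the two generators in the statement, $u=(F-E_2)\otimes q$ and $v=(B-E_2)\otimes q$, are the natural combinations of the corresponding $u_i=[D_i]\otimes q$.

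Next I would compute the six Seidel elements $\m S(\Gamma_i)$, determine the primitive sets $C$, and write the two generators of $\Lin(P)$ from the projections of the $\eta_i$ onto $(1,0)$ and $(0,1)$. As in Section~\ref{sec:hirzebruch-surfaces}, each linear relation $\sum n_i\,\eta_i=0$ among the normals forces a multiplicative relation $\prod \m S(\Gamma_i)^{n_i}=\mathbbm{1}$, which unwinds into a quantum product $u_{i_1}*u_{i_2}=f_{i_1 i_2}$ for each primitive pair. Assembling $\Lin(P)+\SR_Y(P)$ and applying the isomorphism \eqref{quantum-homology}, I would then use the two linear relations together with the primitive relations to eliminate the four facet variables other than $u$ and $v$; substituting back and keeping track of the exponents of $t$ coming from the heights $\phi_{\max}$ should collapse the presentation to $\Piuniv[u,v]/I_{\mu,c_1,c_2}$, with the two displayed relations.

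The main obstacle is the Fano hypothesis of Theorem~\ref{theo:main}. Here the facet class $F-E_1-E_2$ (as well as $E_1-E_2$) has vanishing first Chern number and negative self-intersection $-2$, so the $S^1$-invariant integrable $J$ attached to this polytope is \emph{not} Fano and Theorem~\ref{theo:main} does not apply verbatim: a priori the relevant Seidel elements may acquire lower-order terms. One would therefore have to argue, exactly as is done for the non-Fano action $\Lambda_{e_1}^2$ on $\F_2$ in Section~\ref{sec:hirzebruch-surfaces} via \cite[Theorem 4.4]{AnjosPinsonnault13}, that these corrections in fact vanish, or else feed the corrections of \cite{AnjosLeclercq14} into the computation. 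Because of this delicacy, the cleanest route --- and the one I would actually take --- is to bypass the toric computation altogether and transport the quantum algebra computed by Entov--Polterovich for $\X_3=\bbcp^2\#\,3\,\overline{\bbcp}{}^2$ through the symplectomorphism $\X_3\cong\Mucc$ described above, under the change of basis $\{L,V_1,V_2,V_3\}\mapsto\{B+F-E_1,\,B-E_1,\,F-E_1,\,E_2\}$; the only thing left to check is that, after the rescaling relating $(\nu,\delta_1,\delta_2,\delta_3)$ to $(\mu,c_1,c_2)$, this substitution carries their ideal to $I_{\mu,c_1,c_2}$.
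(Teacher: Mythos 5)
Your first two paragraphs follow exactly the paper's route (the Step \textbf{C} toric recipe), but the ``main obstacle'' you identify is an artifact of choosing the wrong moment polytope, and it sends you off on an unnecessary detour. You place both blow-up corners on the same length-$1$ side of the rectangle, which indeed produces a facet in the class $F-E_1-E_2$ with vanishing first Chern number and a non-Fano invariant $J$. The paper instead takes the standard action with polytope $P=\{0\le x_2\le\mu,\ -1\le x_1\le 0,\ c_1\le x_2-x_1\le \mu+1-c_2\}$, i.e.\ the two blow-ups chop \emph{opposite} corners of the rectangle. The six facet classes are then $A_1=F-E_2$, $A_2=B-E_1$, $A_3=E_1$, $A_4=F-E_1$, $A_5=B-E_2$, $A_6=E_2$, each with first Chern number $1$; this is the toric del Pezzo surface of degree $6$, the toric complex structure is Fano and $T$-invariant, and Theorem \ref{theo:main} applies verbatim with no lower-order-term corrections and no appeal to the non-Fano technology you invoke for $\Lambda^2_{e_1}$ on $\F_2$. (Note also that with this polytope $u=A_1\otimes q$ and $v=A_5\otimes q$ are themselves facet generators, not combinations of them.) The two linear relations $A_5=A_1+A_2-A_4$ and $A_6=A_3+A_4-A_1$ together with the nine primitive-set relations then collapse the presentation \eqref{quantum-homology} to $\Piuniv[u,v]/I_{\mu,c_1,c_2}$, exactly as you outline.

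Your fallback --- citing Entov--Polterovich and transporting their computation for $\X_3$ through the birational identification --- is not wrong as a proof of the lemma: the paper itself attributes the result to \cite[proof of Proposition 4.3]{EntovPolterovich08} and only \emph{sketches} the toric derivation. But it has two costs you should weigh. First, you leave the essential step (checking that the change of basis $\{L,V_1,V_2,V_3\}\mapsto\{B+F-E_1,B-E_1,F-E_1,E_2\}$ and the rescaling $(\nu,\delta_1,\delta_2,\delta_3)\mapsto(\mu,c_1,c_2)$ carry their ideal onto $I_{\mu,c_1,c_2}$, with the correct $t$-exponents) unverified, and that bookkeeping is the entire content of the translation. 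Second, and more importantly for this paper, the toric computation is not dispensable ballast: the explicit Seidel elements $\m S(\Gamma_1),\dots,\m S(\Gamma_6)$, with their exponents $\epsilon_1,\epsilon_2$ from \eqref{epsilons}, are precisely what the subsequent proposition (that $2(x_0+y_0)\in\ker(\m S)$ iff $c_1=c_2$) consumes, since $x_0=\Gamma_2$ and $y_0=\Gamma_1$. Bypassing the toric computation proves the lemma but strands the argument it was set up to serve; had you used the paper's polytope, the Fano hypothesis holds and your original plan goes through without any detour.
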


We recall here parts of this computation, \textit{using the formalism of \cite{AnjosLeclercq14}}, as they will be needed below to understand the proof of the non-injectivity result stated as Theorem \ref{theorem:3pt-blowup}. These parts correspond to Steps \textbf{B.} and \textbf{C.} of Section \ref{sec:background}. 

\begin{proof}[Sketch of proof]
 Consider $(\Mucc,\omucc)$ endowed with the standard action of the torus $T=S^1 \times  S^1$ for which the moment polytope is given by 
\begin{equation*}
P =  \left\{ (x_1,x_2) \in \R^2 \mid 0 \leq x_2 \leq \mu, \, -1 \leq x_1 \leq 0,
  \, c_1 \leq x_2 -x_1 \leq \mu+1 -c_2  \right \}
\end{equation*}  so the primitive outward normals to $P$ are as follows:
\begin{equation*} \eta_1=(0,1), \, \eta_2=(1,0), \,
  \eta_3=(1,-1), \, \eta_4=(0,-1), \, \eta_5=(-1,0), \ \mbox{and} \ \eta_6=(-1,1).
\end{equation*} 
The Delzant construction  gives a method  to obtain, from the polytope $P$,  the symplectic manifold $(\Mucc, \omucc)$ with the toric action $T$: First consider the standard action of the torus $\mathbb T^6$  on $\bb C^6$  and then perform a symplectic reduction at a regular level of that action (for more details see for example \cite[Section 29]{Cannas}). Then the normalised moment map $\Phi: \Mucc \to \R^2$ of the remaining $T$ action, obtained through the Delzant construction,  is given by
$$ \Phi(z_1, \hdots, z_6)=\left(-\frac12 |z_2|^2 + \epsilon_1, -\frac12 |z_1|^2 + \mu - \epsilon_2 \right),  \quad z_i \in \C, $$
where $\epsilon_1$ and $\epsilon_2$ are given by the symplectic parameters $\mu$, $c_1$, and $c_2$ as:
\begin{equation}\label{epsilons}
\epsilon_1=\frac{c_1^3+3c_2^2-c_2^3 -3\mu}{3(c_1^2+c_2^2-2\mu)} \quad \mbox{and} \quad  \epsilon_2=\frac{c_1^3 - c_2^3 +3c_2^2\mu-3\mu^2}{3(c_1^2+c_2^2-2\mu)} \,.
\end{equation}
     Moreover, the homology classes $A_i= [\Phi^{-1}(D_i)]$ of the pre-images of the corresponding facets $D_i$ are:  
$A_1=F-E_2 $,  $A_2=B-E_1$, $A_3=E_1$,  $A_4=F-E_1$, $A_5=B-E_2$, and 
$A_6=E_2$.

For $1 \leq i\leq 6$, let $\Gamma_i$ be the circle action associated to the primitive outward normal $\eta_i$.
Since the toric  complex structure on $\Mucc$ is Fano and $T$--invariant, it follows from \cite[Theorem 1.10]{McDuffTolman06} or  \cite[Theorem 4.5]{AnjosLeclercq14}  (recalled as Theorem \ref{theo:main} in Section \ref{sec:background}) that the Seidel elements associated to the $\Gamma_i$'s are given by the following expressions
\begin{align*}
\m S(\Gamma_1) & =(F-E_2) \otimes qt^{\mu-\epsilon_2}\,,& \quad \quad  & \m S(\Gamma_2)  = (B-E_1) \otimes q t^{\epsilon_1}\,,\\
\m S(\Gamma_3) & =E_1 \otimes qt^{\epsilon_1+\epsilon_2-c_1}  \,,&\quad \quad  & \m S(\Gamma_4)   =(F-E_1)\otimes q t^{\epsilon_2}\,,\\ 
\m S(\Gamma_5)  & =(B-E_2)\otimes q {t^{1-\epsilon_1}}\,, &\quad \quad  & \m S(\Gamma_6)   =E_2 \otimes qt^{\mu+1 -c_2- \epsilon_1-\epsilon_2} \,.
\end{align*}
There are nine primitive sets: $\{1,3\}$, $\{1,4\}$, $\{1,5\}$, $\{2,4\}$, $\{2,5\}$, $\{2,6\}$, $\{3,5\}$, $\{3,6\}$, and $\{4,6\}$ which yield nine multiplicative relations (which form the Stanley--Riesner ideal) that, combined with the two linear relations $(A_5=A_1+A_2-A_4$ and $A_6=A_3+A_4-A_1)$, give the desired result as explained in Step \textbf{C.} of Section \ref{sec:background} above.
\end{proof}
Assume from now on that $\mu=1$. Recall from \cite[Theorem 1.1]{AnjosPinsonnault13} that if $c_2 < c_1$ then $$\pi_1(\ham(M_{1,c_1,c_2}, \omega_{1,c_1,c_2})) \simeq \bb Z \langle x_0,x_1,y_0,y_1,z \rangle \simeq \bb Z^5$$ where the generators $x_0,x_1,y_0,y_1,z$ correspond to circle actions contained in maximal tori  of the Hamiltonian group. In particular, the generators in which we will be most interested are $x_0=\Gamma_2$ and $y_0=\Gamma_1$ where the $\Gamma_i$'s are defined in the proof  of the  lemma above. 

\begin{rema}
In order to understand the remaining generators, consider the two toric manifolds given by the polytopes in Figure \ref{polytope}. We denote by $\{ x_{0,i}, y_{0,i}\}$ the generators in $\pi_1{(T_i)}$,  where $T_i$, $i=1,2$, represent the two torus actions in  this figure and the generators $\{ x_{0,i}, y_{0,i}\}$ correspond to the circle actions whose moment maps are, respectively, the first and second components of the moment map associated to each one of the toric actions. 
It was shown in \cite[Lemma 4.5]{AnjosPinsonnault13} that $x_1=x_{0,1}$, $z=y_{0,2}$, and $y_1=y_{0,1}-x_1=z-x_{0,2}$.  

Note that the case $c_1=c_2$ is an interesting limit case in terms of the topology of the Hamiltonian group since $y_1$ disappears. For more details see \cite[Section 5.1]{AnjosPinsonnault13}.   
\end{rema}

\begin{figure}
\begin{tikzpicture} 
    \draw[->]  (0.2,0)--(-2.2,0) node[left] {\footnotesize $x$};
    \draw[->] (0,-0.2) -- (0,3.2) node[above] {\footnotesize $y$};
    \draw[domain=-1.9:0] plot (\x,{2.8 +0* \x});
    \draw[domain=-1:-0.4] plot (\x,{1+1 * \x}); 
    \draw[domain=-0.4:0] plot (\x,{1.4+2 * \x}); 
    \draw (-1.9,0) -- (-1.9,2.8);
    
    \node at (0.2,2.8) {\footnotesize$\mu$};
    \node at (0.6,1.4) {\footnotesize $c_1+c_2$};
    \node at (0.6,0.6) {\footnotesize $c_1 -c_2$};
    \node at (-0.4,-0.3) {\footnotesize $-c_2$};
    \node at (-1.2,-0.3) {\footnotesize $-c_1$};
    \node at (-2,-0.3) {\footnotesize $-1$};
   
    \draw[dashed] (-0.4,0.6) -- (0,0.6);
    \draw[dashed] (-0.4,0.6) -- (-0.4,0);
 
    \draw[->]  (5.2,0)--(2.8,0) node[left] {\footnotesize $x$};
    \draw[->] (5,-0.2) -- (5,3.2) node[above] {\footnotesize $y$};
    \draw[domain=3.1:5] plot (\x,{2.8 +0* \x});
    \draw[domain=4.4:5] plot (\x,{-4+1 * \x}); 
    \draw[domain=3.6:4.4] plot (\x,{-1.8+0.5 * \x}); 
    \draw (3.1,0) -- (3.1,2.8);
    
    \node at (5.2,2.8) {\footnotesize$\mu$};
    \node at (5.3,1) {\footnotesize $c_1$};
    \node at (5.3,0.4) {\footnotesize $c_2$};
    \node at (4.4,-0.3) {\footnotesize $-c_1+c_2$};
    \node at (3.8,-0.7) {\footnotesize $-c_1-c_2$};
    \node at (3,-0.3) {\footnotesize $-1$};
   
    \draw[dashed] (4.4,0.4) -- (5,0.4);
    \draw[dashed] (4.4,0.4) -- (4.4,0);
    \draw[dashed] (3.6,0) -- (3.6,-0.5);
  \end{tikzpicture}
  
\caption{$(\Mucc, \omucc)$ with toric actions $T_1$ and $T_2$.}
\label{polytope}
\end{figure}

To prove Theorem \ref{theorem:3pt-blowup}, we will now show that
\begin{prop}
  The class of $2(x_0+y_0)$ belongs to $\ker(\m S)$ if and only if $c_1 = c_2$.
\end{prop}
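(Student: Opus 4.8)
The plan is to turn the statement into an algebraic identity in $QH_*(M_{1,c_1,c_2},\omega_{1,c_1,c_2})$ and then read off its dependence on $c_1-c_2$. Since $\m S$ is a homomorphism from the additive group $\pi_1(\ham(M,\omega))$ to the multiplicative group $QH_*^\times$, I first compute $\m S(2(x_0+y_0)) = \m S(x_0)^2\,\m S(y_0)^2$ with $x_0=\Gamma_2$ and $y_0=\Gamma_1$. From the Seidel elements listed in the sketch of proof above, $\m S(y_0)=\m S(\Gamma_1)=u\,t^{1-\epsilon_2}$ (recall $\mu=1$). For $\m S(x_0)=\m S(\Gamma_2)$ I use that $\eta_2+\eta_5=0$, so $\Gamma_5=\Gamma_2^{-1}$ and hence $\m S(\Gamma_2)=\m S(\Gamma_5)^{-1}=v^{-1}t^{\epsilon_1-1}$ (here $v$ is invertible, being a unit times a Seidel element). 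This gives
\[
  \m S(2(x_0+y_0)) = u^2v^{-2}\,t^{2(\epsilon_1-\epsilon_2)}.
\]
The first key point is that once $\mu=1$ is imposed, the two numerators in \eqref{epsilons} become equal, so $\epsilon_1=\epsilon_2$ and the power of $t$ disappears. The proposition thus amounts to: $u^2=v^2$ in $QH_*$ if and only if $c_1=c_2$.

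Next I would extract the relevant relations from the ideal $I_{1,c_1,c_2}$. Writing $g_1$ and $g_2$ for its two generators with $\mu=1$, the difference $g_1-g_2$ factors as $(u-v)(uv+t^{-1})\,t^{-c_2}$, so that
\[
  (u-v)(uv+t^{-1})=0 \quad\text{in } QH_*.
\]
This relation is special to $\mu=1$ and is the engine of the argument: from it one gets $(u-v)uv=-(u-v)t^{-1}$ and, multiplying once more by $uv$, the reduction $(u-v)u^2v^2=(u-v)t^{-2}$.

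The heart of the proof is to combine $g_1$ and $g_2$ into the key identity
\[
  u^2-v^2 = (u-v)\bigl(t^{c_2-1}-t^{c_1-1}\bigr) \quad\text{in } QH_*(M_{1,c_1,c_2}).
\]
To reach it I would form $u\,g_1-v\,g_2$, which factors through $(u-v)$, use the sum $g_1+g_2$ to eliminate the cubic term $uv(u+v)t^{-c_2}$, and finally substitute $(u-v)u^2v^2=(u-v)t^{-2}$; after multiplying through by $t^{1+c_2}$ everything collapses to the displayed identity. I expect this bookkeeping to be the main obstacle, as it requires choosing the combinations precisely so that every remaining term reduces to a single multiple of $(u-v)$.

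Finally I would conclude. The factor $t^{c_2-1}-t^{c_1-1}$ lies in the field $\Pi^{\mathrm{univ}}$, while $u-v=(F-B)\otimes q$ is a nonzero vector of the $\Pi^{\mathrm{univ}}$--vector space $QH_*$ (since $F\neq B$ in $H_2$). Hence $(u-v)(t^{c_2-1}-t^{c_1-1})$ vanishes if and only if the scalar does, i.e. if and only if $c_1=c_2$. Therefore $\m S(2(x_0+y_0))=u^2v^{-2}=\mathbbm{1}$ exactly when $c_1=c_2$, which proves the proposition; combined with the non-triviality of $2(x_0+y_0)$ coming from the computation of $\pi_1$ in \cite{AnjosPinsonnault13}, this also yields Theorem \ref{theorem:3pt-blowup}.
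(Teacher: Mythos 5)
Your proof is correct and takes essentially the same route as the paper's: the same reduction $\m S(2(x_0+y_0))=u^2v^{-2}$ via $\epsilon_1=\epsilon_2$ at $\mu=1$, the same key relation $g_1-g_2=t^{-c_2}(u-v)(uv+t^{-1})$ (which the paper writes as $(v^{-1}-u^{-1})t^{-1}=v-u$), and the same conclusion from invertibility of $t^{c_1}-t^{c_2}$ in $\Pi^{\mathrm{univ}}$ when $c_1\neq c_2$ together with $u-v=(F-B)\otimes q\neq 0$. Your packaging as the single ring identity $u^2-v^2=(u-v)(t^{c_2-1}-t^{c_1-1})$, obtained from $u\,g_1-v\,g_2$, is just a mild reorganization of the paper's chain of equivalences (and indeed works as claimed -- one checks $u\,g_1-v\,g_2\equiv(u-v)t^{-2}-(u^2-v^2)t^{-1-c_2}-(u-v)t^{c_1-2-c_2}$ modulo the relation, so the $g_1+g_2$ step and the inverses of $u,v$ are not even needed), differing from the paper's statement only by the unit $-t$.
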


\begin{proof}
  From the computation of the Seidel elements in the proof of the lemma above one gets that in the general case (by which we mean for all $\mu \geq 1$): $\m S(\Gamma_1)  =ut^{\mu-\epsilon_2}$ and $\m S(\Gamma_5)  =v {t^{1-\epsilon_1}}$. As the Seidel elements are \textit{invertible} quantum classes, this yields invertibility of $u$ and $v$. Note that $\m S(x_0)=\m S(\Gamma_2)=\m S(\Gamma_5)^{-1}=v^{-1}t^{\varepsilon_1 -1}$ and $\m S(y_0)=\m S(\Gamma_1)=ut^{\mu-\varepsilon_2}$.
  
Since $\mu \geq 1 > c_2^2$, it is straightforward to deduce from \eqref{epsilons} that $\varepsilon_1 = \varepsilon_2$ if and only if $\mu = 1$: we now restrict our attention to this case and denote by $\varepsilon$ the common value of  $\varepsilon_1 = \varepsilon_2$. By invertibility of $u$ and $v$, the fact that $2(x_0+y_0)$ belongs to $\ker(\m S)$ is equivalent to $u^2 = v^2$, since 
$$\m S (2(x_0+y_0))= \m S(x_0)^2 *\m S(y_0)^2= v^{-2}t^{\epsilon-1}u^2 t^{1-\epsilon}=v^{-2}u^2 \,.$$
On the other hand, note that multiplying the first and second relations in $I_{1, c_1, c_2}$ by $v^{-1}t^{c_2}$ and $u^{-1}t^{c_2}$, respectively, these become equivalent to   
$$ u^2= t^{-1}+v^{-1}t^{c_1-2}-u^2vt^{c_2}  \quad \mbox{and} \quad v^2= t^{-1}+u^{-1}t^{c_1-2} -uv^2t^{c_2}$$
 so that $u^2=v^2$ is equivalent to $v^{-1}t^{c_1-2} -u^2vt^{c_2} = u^{-1}t^{c_1-2} -uv^2t^{c_2}.$

Multiplying both relations in $I_{1, c_1, c_2}$ by $t^{2c_2}$, we see that
\begin{align}\begin{split}\label{intermediate-equation}
  -u^2vt^{c_2} &= (u^2v^2t^{2c_2} - t^{c_1+c_2-2}) - vt^{c_2-1}, \quad\mbox{and} \\
  -uv^2t^{c_2} &= (u^2v^2t^{2c_2} - t^{c_1+c_2-2}) - ut^{c_2-1}
\end{split}\end{align}
so that we can replace $u^2vt^{c_2}$ and $uv^2t^{c_2}$ in the previous equality to obtain
\begin{align}\label{equation}
  u^2 = v^2  \;\Longleftrightarrow\; v^{-1}t^{c_1-1} + ut^{c_2} = u^{-1}t^{c_1-1} + vt^{c_2} \,.
\end{align}
Finally, \eqref{intermediate-equation} also induces, by subtracting one from the other, the equality 
$(u^2v -uv^2)t^{-c_2}=(v-u)t^{-1-c_2}$  which is equivalent to $(v^{-1} - u^{-1})t^{-1} = v - u$. 
Using these together with \eqref{equation} we conclude that $u^2 = v^2$ if and only if $(u-v)(t^{c_1}-t^{c_2})=0$ which is equivalent to $c_1 = c_2$ since otherwise $t^{c_1}-t^{c_2}$ would be invertible.
\end{proof}

\end{document}